\documentclass[12pt]{amsart}

\usepackage{amsmath, amssymb}

\usepackage[dvipdfmx]{graphics}
\usepackage{here}
\usepackage{tikz}
\usepackage{tikz-cd}
\usetikzlibrary{arrows.meta}

\usepackage{amsmath, amssymb}
\usepackage{pifont}
\usepackage{booktabs}

\numberwithin{equation}{section}
\newtheorem{theorem}{Theorem}[section]  
\newtheorem{theorem?}{``Theorem''}[section]  
\newtheorem{corollary}[theorem]{Corollary}
\newtheorem{proposition}[theorem]{Proposition}
\newtheorem{lemma}[theorem]{Lemma}

\theoremstyle{definition}
\newtheorem{definition}[theorem]{Definition}

\theoremstyle{remark}
\newtheorem{remark}[theorem]{Remark}

\newcommand{\R}{{\mathbb R}}
\newcommand{\C}{{\mathbb C}}
\newcommand{\PP}{{\mathbb P}}

\newcommand{\N}{{\mathbb N}}
\newcommand{\Z}{{\mathbb Z}}

\makeindex             

\begin{document}

\title[On horn-shaped Reinhardt domains]{
On horn-shaped Reinhardt domains}
\author{
Chika Hayashida, 
Joe Kamimoto and Ninh Van Thu}

\address{Faculty of Mathematics, 
Kyushu university, 
Motooka 744, Nishi-ku, 
Fukuoka, 819-0395, Japan.}  
\email{hayashida.chika.311@s.kyushu-u.ac.jp}

\address{Faculty of Mathematics, 
Kyushu university, 
Motooka 744, Nishi-ku, 
Fukuoka, 819-0395, Japan.}  
\email{joe@math.kyushu-u.ac.jp}

\address{Faculty of Mathematics and Informatics, 
Hanoi University of Science and Technology, No. 1 
Dai Co Viet, Hai Ba Trung, Hanoi, Vietnam}
\email{thu.ninhvan@hust.edu.vn}
%
%
\keywords{horn domain, 
Bergman space, 
Bergman kernel, 
Bergman metric, 
holomorphic sectional curvature, 
holomorphic equivalence problem}
\subjclass[2020]{32A36 (32A40, 53C55, 32M05).}

\begin{abstract}
In this paper, 
we introduce a new class of  
unbounded Reinhardt domains in $\C^n$ ($n\geq 2$), 
which we call {\it horn domains}. 
We study 
the dimensions of the Bergman spaces, 
the holomorphic sectional curvature 
of the Bergman metric,
and the holomorphic equivalence problem
for such horn Reinhardt domains. 
\end{abstract}

\maketitle

\section{Introduction}

In this paper, 
we introduce a new class of unbounded 
Reinhardt domains in 
$\C^n$ ($n\geq 2$), which we call {\it horn domains}. 
These domains provide natural geometric models 
that extend the known examples 
studied in \cite{Wie84, HLT25, HaK25}, 
and they exhibit rich geometric behavior 
that has not been systematically explored before.
These domains simultaneously 
affect three fundamental problems 
in several complex variables: 
the dimensions of Bergman spaces, 
the holomorphic sectional curvature of the Bergman metric, 
and the holomorphic equivalence problem.
These three aspects are deeply interconnected. 

We now explain in more detail 
the motivation for generalizing 
the domains considered in
\cite{Wie84, HLT25, HaK25}. 
The unbounded domains studied 
in these papers have a horn-like geometry 
that narrows in one direction.
This geometric property plays a crucial role 
in the interesting results 
obtained there concerning 
the three issues mentioned above.
In the present paper, 
we consider a broader class of domains and 
show that the same phenomena essentially 
follow from an exponent that 
describes how the volume of the intersection 
of the domain with a transversally cutting 
hypersurface decreases.

This paper is organized as follows.
In Section~2, 
we give the definition of a horn domain.
In Section~3, 
we show that the Bergman spaces 
of horn domains are finite-dimensional.
In Section~4, 
we prove that the Bergman metric of 
horn domains of certain orders has constant 
holomorphic sectional curvature equal to 2.
In Section~5, 
we study holomorphic equivalence problem for
horn domains. 
\clearpage

{\it Notation and symbols.}\quad
\begin{itemize}
\item
We denote by $\N$, $\R$, $\C$ the set consisting of 
all natural numbers, real numbers, 
complex numbers, respectively. 
Moreover, we
denote by $\Z_+$, $\R_+$, $\R_{>0}$
the set consisting of all nonnegative integers, 
all nonnegative real numbers, all positive real numbers,  
respectively. 
\item
For $p:=(p_1,\ldots,p_n)\in\Z_+^n$, 
$\alpha:=(\alpha_1,\ldots,\alpha_{n-1})\in\R_{+}^{n-1}$, 
$X=(X_1,\ldots,X_n)\in\R^n$, 
$z=(z_1,\ldots,z_n)\in\C^n$, 
we denote 
\begin{eqnarray*}
&&|p|:=p_1+\cdots+p_n, \,\, 
\langle\alpha\rangle:=\alpha_1+\cdots+\alpha_{n-1}, \,\, 
z^{p}:=z_1^{p_1}\cdots z_n^{p_n}, \\
&&|X|_{\R}:=\sqrt{X_1^2+\cdots+X_n^2}, \;\;  
|z|_{\C}:=\sqrt{|z_1|^2+\cdots+|z_n|^2}.
\end{eqnarray*}
\item For $\delta>0$, we define 
${\mathbb B}(\delta):=\{z\in\C^n:|z|_{\C}<\delta\}$, 
\item
For $\epsilon>0$, we define 
\begin{equation}\label{eqn:1.1}
\begin{split}
&S^+:=\{X\in\R_+^n:|X|_{\R}=1\}, \\
&S_{\epsilon}^+
:=\{X\in S^+: X_j\geq \epsilon
\mbox{ for $j=1,\ldots,n$} \}. 
\end{split}
\end{equation}
\item
For $\epsilon>0$, we define 
\begin{equation}\label{eqn:1.2}
\begin{split}
&
H^+:=\{(X_1,\ldots,X_n)\in\R_+^n:
X_n=1\}, \\ 
&
H_{\epsilon}^+
:=\{X\in H^+: \epsilon\leq X_j \leq 1/\epsilon
\mbox{ for $j=1,\ldots,n$} \}. 
\end{split}
\end{equation}
\item 
Let $\pi:\C^n \to \R_+^n$ be the projection
defined by 
\begin{equation}
\pi(z_1\ldots,z_n)=(|z_1|, \ldots, |z_n|).
\end{equation}
\item
Let $\Omega\subset\C^n$ be a Reinhardt domain
(i.e., if $\pi(z)\in\Omega$, then $z\in\Omega$). 
We denote 
\begin{equation}\label{eqn:1.3}
\widehat{\Omega}:=\pi(\Omega)
(=\{\pi(z)\in\R_+^n: z\in\Omega\})=
\{X\in\R_+^n: X \in \Omega\}. 
\end{equation}
\item
Let $d\sigma$ be a standard surface element
on $S^+$, 
i.e., for $\xi=(\xi_1\ldots,\xi_n)\in S^+$, 
$d\sigma$ can be expressed as 
$$
d\sigma(\xi)=
\sum_{k=1}^n (-1)^k \xi_k 
d\xi_1\wedge 
\cdots 
d\xi_{k-1} \wedge 
d\xi_{k+1} \wedge 
\cdots
\wedge d\xi_n. 
$$
For an open set $U$ on $S^+$, 
we denote by $d\sigma(U)$
the volume of $U$ with respect to $d\sigma$, i.e., 
$d\sigma(U):=\int_U d\sigma$.
\item
For $U\subset \R^k$, 
we denote by ${\rm Vol}_{\R^k}(U)$
the volume of $U$ with respect to 
the Lebesgue measure on $\R^k$. 
\item 
Let $A(t)$, $B(t)$ be real-valued functions defined 
for large $t$. 
We write $A(t)\approx B(t)$ if there exist 
positive numbers $c_1, c_2, t_0$ independent of 
$t$ such that  
$c_1 B(t)\leq A(t) \leq c_2 B(t)$ for all $t\geq t_0$.
\item
For $t>0$ and $A\subset\R^n$, 
we denote $tA:=\{ta\in\R^n: a\in A\}$. 
\end{itemize}

\section{Definition of horn domains}

First, let us give a definition of 
a new class of unbounded Reinhardt domains.

\begin{definition}
Let $a>0$.  
We say that 
$\Omega\subset\C^n$ is a {\it horn domain of order} $a$ 
if $\Omega$ is a Reinhardt domain satisfying the 
following conditions 
\begin{enumerate}
\item 
There exists a positive number $\delta$ such that 
${\mathbb B}(\delta)\subset\Omega$. 
\item
There exist positive numbers $R$ and $\epsilon$ 
such that 
\begin{equation}\label{eqn:2.1}
\widehat{\Omega}\setminus\widehat{{\mathbb B}(R)}
=
\bigcup_{\rho\geq R} 
\rho\, U(\rho), 
\end{equation}
where $U(\rho)$ is an open connected subset of 
$S_{\epsilon}^+$ (see (\ref{eqn:1.1}))
and satisfies that there exist positive constants
$A_1$ and $A_2$
independent of $\rho$ such that
\begin{equation}\label{eqn:2.2}
\frac{A_1}{\rho^{2a}}\leq 
d\sigma(U(\rho))\leq 
\frac{A_2}{\rho^{2a}}
\quad 
\mbox{for $\rho\geq R$}
\quad 
\left(\Leftrightarrow 
d\sigma(U(\rho))\approx \frac{1}{\rho^{2a}}
\right).
\end{equation}
\end{enumerate}
Furthermore, we say that 
a horn domain $\Omega$ is {\it simple} 
if the sets $U(\rho)$ in (ii) satisfy
the following condition: 
\begin{enumerate}
\item[(iii)]
There exists a point $X^*\in S_{\epsilon}^+$ 
such that 
\begin{equation}
\bigcap_{\rho\geq R} U(\rho)=\{X^*\}.
\end{equation}
\end{enumerate}
\end{definition}

\begin{remark}
(1) \;
The behavior of $d\sigma(U)$ in (\ref{eqn:2.2})
can be considered in much more generality.
However, for the purposes of the problems 
discussed below,
the most essential effect is captured 
by the expression (\ref{eqn:2.2}).
We will return to this point in the final section.

\smallskip
(2) \; 
In the above definitions, 
one may drop the assumption of connectedness of
$U(\rho)$ and $\widehat{\Omega}\cap\rho S^+$, 
and instead consider a more general situation, 
for instance,
a finite union of connected open sets.
However, 
under the condition that
$U(\rho)$ is contained in $S_{\epsilon}^{+}$ 
for some $\epsilon>0$,
such a generalization merely complicates 
the arguments
without yielding any essential strengthening 
of the main results.
For this reason, 
we do not pursue this direction in the present paper.

We emphasize that the condition 
$U(\rho)\subset S_{\epsilon}^{+}$ is crucial.
Without this assumption, 
none of the results in this paper hold
(see the forthcoming paper \cite{HKN26}). 

\smallskip
(3) \;
In the definition, 
the condition (ii) can be replaced by 
the following condition.

\begin{enumerate}
\item[(ii')]
There exist positive numbers 
$R$ and $\epsilon$ such that 
\begin{equation}\label{eqn:2.3}
\widehat{\Omega}\cap
\{X\in\R_+^n: X_n\geq R\}=
\bigcup_{\rho\geq R} 
\rho V(\rho), 
\end{equation}
where $V(\rho)$ is an open and connected 
subset of $H^+_{\epsilon}$ (see (\ref{eqn:1.2}))
and satisfies that there exists 
positive numbers $A_1, A_2$ such that
\begin{equation}\label{eqn:2.4}
\frac{A_1}{\rho^{2a}}\leq 
{\rm Vol}_{\R^{n-1}}(V(\rho))\leq 
\frac{A_2}{\rho^{2a}}
\quad \mbox{for $\rho\geq R$}
\quad 
\left(\Leftrightarrow 
{\rm Vol}_{\R^{n-1}}(V(\rho))\approx \frac{1}{\rho^{2a}}
\right).
\end{equation}
\end{enumerate}
\end{remark}

To demonstrate that 
the above class provides 
a substantial generalization of 
the known specific unbounded domains 
in \cite{Wie84, HLT25, HaK25}, 
we introduce a special class of horn domains, 
defined in a more explicit manner.
For $\alpha=(\alpha_1\ldots,\alpha_{n-1})
\in\R_{>0}^{n-1}$, 
let 
${\mathcal H}_n(\alpha)$ denote the set of 
Reinhardt domains 
$\Omega\subset\C^n$ 
satisfying the following conditions:
\begin{enumerate}
\item
For each $\rho>0$, 
$\widehat{\Omega}\cap \rho S^+$ is an open and 
connected subset of $\rho S^+$. 
\item 
There exist positive constants $c_1, c_2$ such that 
\begin{equation}\label{eqn:2.5}
c_1 D(\alpha) 
\subset \Omega \subset
c_2 D(\alpha), 
\end{equation}
where $D(\alpha)$ is the domain defined by 
\begin{equation}\label{eqn:2.6}
D(\alpha)=
\left\{
z\in\C^n: 
\left||z_k|^2-|z_n|^2\right|<
\frac{1}{(|z_n|^2+1)^{\alpha_k}}
\text{ for } k=1,\ldots,n-1
\right\}.
\end{equation}
%
\end{enumerate}

We note
that the construction of the above $D(\alpha)$ 
is inspired by the following two known 
examples in $\C^2$:
\begin{equation}\label{eqn:2.7}
\begin{split}
&D_{\rm Wie}(\alpha):=
\left\{z\in\C^2:  
\big||z_1|-|z_2|\big|\cdot 
(|z_1|+|z_2|)^{2\alpha}<1
\right\};\\
&D_{\rm HL}(\alpha):=
\left\{z\in\C^2: 
\left||z_1|^2-|z_2|^2\right|<\frac{1}{(1+|z_1|^2+|z_2|^2)^{\alpha}}
\right\},
\end{split}
\end{equation}
where $\alpha\in\R_{>0}$. 
The above two examples are 
given in the papers by Wiegerinck \cite{Wie84} and
by Huang, Li and Treuer \cite{HLT25}, respectively. 
(The domain $D_{\rm Wie}(\alpha)$ is modified 
from our perspective.) 
It follows from the implicit function theorem that 
$D_{\rm Wie}(\alpha)$ and $D_{\rm HL}(\alpha)$ 
belong to the classes ${\mathcal H}_2(\alpha)$. 
We remark that 
the boundary of $D(\alpha)$ is not smooth 
when $n\geq 3$, but it is easy to 
construct domains belonging to 
the class ${\mathcal H}_n(\alpha)$ 
with smooth boundaries.

\begin{lemma}
Let $\alpha\in\R_{>0}^{n-1}$. 
If $\Omega\in{\mathcal H}_n(\alpha)$, then
$\Omega$  
is a simple horn domain of order 
$\langle\alpha\rangle+n-1$.
\end{lemma}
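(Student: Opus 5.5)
We verify conditions (i)--(iii) of the definition directly from the sandwich (\ref{eqn:2.5}), with $X^*=(1/\sqrt n,\ldots,1/\sqrt n)$. The plan is to compute the spherical slices of $\widehat{D(\alpha)}$ asymptotically, show they are comparable in $d\sigma$-measure to $\rho^{-2(\langle\alpha\rangle+n-1)}$, and then transfer the two-sided estimate to $\Omega$ using $c_1D(\alpha)\subset\Omega\subset c_2D(\alpha)$.

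Condition (i) is immediate. On $\mathbb B(\delta)$ with $\delta$ small we have $||z_k|^2-|z_n|^2|\le\delta^2<(1+\delta^2)^{-\alpha_k}$. Hence $\mathbb B(\delta)\subset D(\alpha)$, and so $\mathbb B(c_1\delta)\subset\Omega$.

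Next we describe $\widehat{D(\alpha)}$ for large $|X|_\R=\rho$. Write $X_k^2=X_n^2+s_k$ with $|s_k|<(X_n^2+1)^{-\alpha_k}$. Then $\rho^2=nX_n^2+\sum s_k$, so $X_n^2=\rho^2/n+O(1)$, and every $X_j/\rho$ tends to $1/\sqrt n$. In particular, for $\rho\ge R$ the slice lies in $S^+_\epsilon$ for any fixed $\epsilon<1/\sqrt n$.

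The same holds for $\Omega$: we set $U(\rho):=\rho^{-1}(\widehat\Omega\cap\rho S^+)$, which is open and connected by assumption (1) of $\mathcal H_n(\alpha)$. Since $\Omega\subset c_2D(\alpha)$, every point of $\widehat\Omega$ far out satisfies $X_k^2-X_n^2=O(\rho^{-2\alpha_k})$. Hence $U(\rho)$ lies in the set
\[
\{\xi\in S^+: |\xi_k^2-\xi_n^2|\le C\rho^{-2-2\alpha_k},\ k=1,\ldots,n-1\}.
\]
This set shrinks to $\{X^*\}$ and lies in $S^+_\epsilon$.

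For the measure estimate we use the coordinates $u_k=\xi_k^2-\xi_n^2$, $k=1,\ldots,n-1$, on $S^+$ near $X^*$. Since $\xi_n$ is determined by $\sum\xi_j^2=1$, this is a smooth chart with Jacobian bounded above and below near $X^*$. In this chart the upper set above is a box of volume $\approx\prod_k\rho^{-2-2\alpha_k}=\rho^{-2(\langle\alpha\rangle+n-1)}$. For the lower bound, $c_1D(\alpha)\subset\Omega$ gives that $U(\rho)$ contains $\{|u_k|<c\rho^{-2-2\alpha_k}\}$ for a suitable small $c>0$. To see this, note that in $c_1D(\alpha)$ the condition reads $|X_k^2-X_n^2|<c_1^2(X_n^2/c_1^2+1)^{-\alpha_k}$, and $X_n^2\approx\rho^2/n$. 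This box has volume of the same order, which gives (\ref{eqn:2.2}) with $a=\langle\alpha\rangle+n-1$.

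Finally, for (iii) we need $X^*\in U(\rho)$ for every $\rho$, i.e.\ the diagonal must lie in $\Omega$. The lower box contains $u=0$, so this holds for $\rho\ge R$. The intersection of the $U(\rho)$ is then exactly $\{X^*\}$, since the upper sets shrink to $X^*$. For (ii) we also need $\widehat\Omega\setminus\widehat{\mathbb B(R)}=\bigcup_{\rho\ge R}\rho U(\rho)$, which holds by the definition of $U(\rho)$.

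The main obstacle is bookkeeping in the lower-bound inclusion: converting the conditions defining $c_1D(\alpha)$, stated in terms of $X_n$, into uniform box conditions in $u$ on the unit sphere. This needs $X_n\approx\rho$ uniformly and a check that the chart's Jacobian at $X^*$ is nondegenerate. The determinant is positive because $u$ is essentially a linear function of $(\xi_1^2,\ldots,\xi_{n-1}^2)$ restricted to the sphere. I expect these steps to be routine.
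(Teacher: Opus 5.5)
Your proof is correct, but it takes a different route from the paper's.

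\textbf{The paper's route.} The paper first asserts that it suffices to treat $D(\alpha)$ itself. It then uses the hyperplane version (ii') of the definition from Remark~2.2(3). There the normalized slice $V(\rho)=\rho^{-1}(\widehat{D(\alpha)}\cap\rho H^+)$ is an exact product of intervals $\delta_k^{(-)}(\rho)<X_k-1<\delta_k^{(+)}(\rho)$. A Taylor expansion gives side lengths $\approx\rho^{-2\alpha_k-2}$, and the volume count is a one-line product.

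\textbf{Your route.} You verify condition (ii) as literally stated, with spherical slices, and you treat $\Omega$ directly through both inclusions of the sandwich. The cost is the curvilinear chart $u_k=\xi_k^2-\xi_n^2$, in which the slice is only comparable to a box, so you need a Jacobian bound. That bound is harmless, as you say:
\begin{itemize}
\item $u$ is an invertible affine function of $(\xi_1^2,\dots,\xi_{n-1}^2)$, with matrix $I+J$ ($J$ the all-ones matrix), whose eigenvalues are $1$ and $n$;
\item near $X^*$ one has $d\sigma=d\xi_1\cdots d\xi_{n-1}/\xi_n$.
\end{itemize}

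\textbf{What each buys.} Your approach does not rely on the equivalence (ii)$\Leftrightarrow$(ii'), which the paper asserts without proof. It also makes explicit the reduction the paper leaves implicit, in particular that $X^*$ lies in every slice of $\Omega$; you get this from the inner box around $u=0$. The paper's approach buys a cleaner computation.

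Its reduction is in fact quick. The scaling identity
\[
\rho^{-1}\bigl(\widehat{cD(\alpha)}\cap\rho S^+\bigr)=(\rho/c)^{-1}\bigl(\widehat{D(\alpha)}\cap(\rho/c)S^+\bigr)
\]
sandwiches the slices of $\Omega$ between the slices of $D(\alpha)$ at radii $\rho/c_1$ and $\rho/c_2$. This transfers both the measure bounds and the simplicity condition at once.

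A minor point: in your lower-bound inclusion you only need $X_n\le\rho$, not $X_n^2\approx\rho^2/n$.
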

\begin{proof}
In order to prove the above lemma, 
it suffices to show that $D(\alpha)$ 
is a simple horn domain of order $\langle\alpha\rangle+n-1$.

First, 
the existence of a positive number $\delta$ 
in (i) and 
the simplicity condition in (iii) are easily verified. 
Next, setting
\begin{equation}\label{eqn:2.8}
V(\rho)=
\frac{1}{\rho}(\widehat{D(\alpha)}\cap \rho H^+), 
\end{equation}
we can obtain the equation in (\ref{eqn:2.3}), 
i.e.,  
$\widehat{D(\alpha)}\cap 
\{X\in\R_+^n:X_{n}\geq R\}=
\bigcup_{\rho\geq R}\rho V(\rho)$.
Therefore, in order to complete the proof of the lemma,
it suffices to show that
there exists $\epsilon>0$ such that
$V(\rho)\subset H_{\epsilon}^+$ 
for $\rho\geq 1$
and that
${\rm Vol}_{\R^{n-1}}(V(\rho))\approx 
\frac{1}{\rho^{2(\langle\alpha\rangle+n-1)}}$
for $\rho>1$.

From the definition of $D(\alpha)$ in 
(\ref{eqn:2.6}), 
\begin{equation*}
\begin{split}
&\quad
\widehat{D(\alpha)}\cap \rho H_+= 
\Biggl\{
(X_1,\ldots,X_{n-1},\rho) \in\R_+^n:  \biggr. \\
& \left. \quad\quad\quad
\sqrt{\rho^2-\frac{1}{(\rho^2+1)^{\alpha_k}}} 
<X_k
<\sqrt{\rho^2+\frac{1}{(\rho^2+1)^{\alpha_k}}} 
\mbox{ for $k=1,\ldots, n-1$}
\right\}.
\end{split}
\end{equation*}
Setting 
\begin{equation}\label{eqn:2.10}
\delta_k^{(\pm)}(\rho):=\frac{1}{\rho}
\sqrt{\rho^2\pm \frac{1}{(\rho^2+1)^{\alpha_k}}}-1, 
\end{equation}
respectively, we can see  
\begin{equation}\label{eqn:2.11}
\begin{split}
&
V(\rho)=
\frac{1}{\rho}(\widehat{D(\alpha)}\cap \rho H_+)
= \\
&\{
(X_1,\ldots,X_{n-1},1)\in \R_+^n:
\delta_k^{(-)}(\rho)<X_k-1<\delta_k^{(+)}(\rho)
\mbox{ for $k=1,\ldots, n-1$}\}. 
\end{split}
\end{equation}
From the above expression of $V(\rho)$, 
there exists $\epsilon>0$ such that 
$V(\rho)\subset H_{\epsilon}^+$ for $\rho\geq 1$. 
Note that Taylor's formula implies 
\begin{equation}\label{eqn:2.12}
|\delta_k^{(\pm)}(\rho)|
\approx 
\frac{1}{\rho^{2\alpha_k+2}} 
\quad \mbox{ for $\rho\geq 1$.}
\end{equation}
From (\ref{eqn:2.11}), (\ref{eqn:2.12}),  
we see 
\begin{equation*}
{\rm Vol}_{\R^{n-1}}(V(\rho))
\approx 
\prod_{k=1}^{n-1} |\delta_k^{(\pm)}(\rho)|
\approx 
\prod_{k=1}^{n-1} 
\frac{1}{\rho^{2\alpha_k+2}}
=\frac{1}{\rho^{2(\langle\alpha\rangle+n-1)}}
\; \mbox{ for $\rho\geq 1$,}
\end{equation*}
which completes the proof of the lemma.
\end{proof}


\section{Dimensions of the Bergman spaces}

Let $\Omega$ be a domain in $\C^n$ and 
let $A^2(\Omega)$ be the Hilbert space 
of the $L^2$-holomorphic functions on $\Omega$, i.e., 
$$
A^2(\Omega)=
\left\{
f:\Omega\to\C \mbox{ holomorphic}:
\|f\|_{\Omega}:=
\left(
\int_{\Omega} |f(z)|^2 dV(z)
\right)^{1/2}<\infty
\right\},
$$
where $dV$ is the Lebesgue measure on $\C^n$.

When $\Omega$ is a bounded domain, 
$A^2(\Omega)$ contains all the monomials, 
which implies that 
the dimension of $A^2(\Omega)$ is infinity. 
On the other hand,  
the issue of the dimension of 
$A^2(\Omega)$ is not obvious in the unbounded case. 
When $\Omega$ is a domain in the complex plane $\C$, 
it is seen in \cite{Wie84} that 
\begin{equation}\label{eqn:3.1}
\text{$A^2(\Omega)$
is either trivial, i.e., $\{0\}$,  or infinite dimensional.}
\end{equation}
Surprisingly, in the same paper \cite{Wie84}, 
Wiegerinck showed that if $\alpha>1$, then
$D_{\rm Wie}(\alpha)$ in (\ref{eqn:2.7}) 
with $\alpha\in\R_{>0}$ 
has a finite-dimensional 
and nontrivial Bergman space.
More recently, 
Huang, Li and Treuer \cite{HLT25} also showed that 
the domain $D_{\rm HL}(\alpha)$ in (\ref{eqn:2.7}) 
has the same properties. 

The following theorem shows that 
the above results can be widely generalized.

\begin{theorem}
Let $\Omega\subset\C^n$ be a horn 
domain of order $a$. 
If $a>n$, then 
the Bergman space of $A^2(\Omega)$ is given by
\begin{equation}\label{eqn:3.2}
A^2(\Omega)={\rm Span}\left\{z^p:
p\in\Z_+^n, |p|<a-n
\right\}.
\end{equation}
More precisely, 
the dimension of $A^2(\Omega)$ 
can be expressed as 
\begin{equation}\label{eqn:3.3}
\dim A^2(\Omega)=
\begin{cases} 
&0 \quad  \mbox{ if  $ 0<a\leq n$,} \\
&\displaystyle \binom{n+k}{n}  \quad
\mbox{if  
$n+k<a\leq n+k+1$ 
 $(k\in\Z_+)$}.
\end{cases}
\end{equation}
\end{theorem}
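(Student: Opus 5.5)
The plan is the standard Reinhardt-domain argument in two steps. First, every $f\in A^2(\Omega)$ has a Laurent expansion $f=\sum_{p\in\Z^n} c_p z^p$ that converges locally uniformly on $\Omega$. Because $\Omega$ is Reinhardt, the monomials $z^p$ that lie in $L^2(\Omega)$ are mutually orthogonal. Integrating $|f|^2$ over the tori $\{|z_j|=r_j\}$ gives
\[
\|f\|_\Omega^2=\sum_p |c_p|^2\|z^p\|_\Omega^2 .
\]
Hence $f\in A^2(\Omega)$ if and only if each term with $c_p\neq0$ satisfies $\|z^p\|_\Omega<\infty$. Since $\mathbb B(\delta)\subset\Omega$, $f$ is holomorphic near the origin, so only $p\in\Z_+^n$ occur. (Alternatively, $\Omega$ contains a neighborhood of $0$, so by the Reinhardt property the Laurent series is a power series.) Therefore $A^2(\Omega)$ is the closed span of $\{z^p: p\in\Z_+^n,\ \|z^p\|_\Omega<\infty\}$. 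This reduces the theorem to showing $\|z^p\|_\Omega<\infty$ if and only if $|p|<a-n$.

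For the monomial norms, use polar coordinates in each variable:
\[
\|z^p\|_\Omega^2=(2\pi)^n\int_{\widehat\Omega}X^{2p}X_1\cdots X_n\,dX .
\]
Split $\widehat\Omega$ into $\widehat\Omega\cap\widehat{\mathbb B(R)}$, which is bounded and contributes a finite amount, and the outer part $\bigcup_{\rho\ge R}\rho U(\rho)$. On the outer part, write $X=\rho\xi$ with $\xi\in S^+$, so that $dX=\rho^{n-1}d\rho\,d\sigma(\xi)$. Here I use the implicit reading of (\ref{eqn:2.1}) that $\widehat\Omega\cap\rho S^+=\rho U(\rho)$ for $\rho\ge R$. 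The outer integral becomes
\[
\int_R^\infty \rho^{2|p|+n+n-1}\int_{U(\rho)}\xi^{2p}\xi_1\cdots\xi_n\,d\sigma(\xi)\,d\rho .
\]
The key point is that $U(\rho)\subset S_\epsilon^+$. This gives $\epsilon^{2|p|+n}\le \xi^{2p}\xi_1\cdots\xi_n\le 1$ on $U(\rho)$, so the inner integral is $\approx d\sigma(U(\rho))\approx\rho^{-2a}$ by (\ref{eqn:2.2}). The outer integral is therefore comparable to $\int_R^\infty\rho^{2|p|+2n-1-2a}\,d\rho$, which is finite exactly when $2|p|+2n-2a<0$, i.e. $|p|<a-n$. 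This gives (\ref{eqn:3.2}). The span is finite-dimensional, so it is closed.

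For the dimension formula (\ref{eqn:3.3}), count $\#\{p\in\Z_+^n:|p|\le k\}=\binom{n+k}{n}$, where $k$ is the largest integer with $k<a-n$. That means $n+k<a\le n+k+1$. If $a\le n$, no $p$ qualifies, so $A^2(\Omega)=\{0\}$.

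The main obstacle is the first step. One has to justify carefully that membership in $A^2$ forces each monomial in the expansion to be individually square-integrable, since the orthogonality argument needs the expansion to converge in a suitable sense on the unbounded domain. I would handle this by applying Fubini on $\Omega$ fibered by tori. For each $X\in\widehat\Omega$, Parseval on the torus $\pi^{-1}(X)$ gives $\int_{\pi^{-1}(X)}|f|^2=\sum_p|c_p|^2X^{2p}\cdot(\text{torus measure})$, and this holds because the Laurent series converges uniformly on each torus. Integrating over $\widehat\Omega$ and using monotone convergence then yields the norm identity without any global convergence assumption. The coefficients $c_p$ are independent of the torus because $\Omega$ is connected; the Laurent expansion is valid on any connected Reinhardt domain. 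The measure-comparison step is routine once the $S_\epsilon^+$ bound on the weight is noted.
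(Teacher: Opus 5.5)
Your proof is correct and follows the paper's route: the same polar-coordinate computation on $\bigcup_{\rho\ge R}\rho U(\rho)$, with $U(\rho)\subset S_\epsilon^+$ and the two-sided estimate $d\sigma(U(\rho))\approx\rho^{-2a}$, shows that $\|z^p\|_\Omega<\infty$ exactly when $|p|<a-n$. For the reverse inclusion, which the paper only defers to Wiegerinck's argument, your Laurent expansion combined with Parseval on each torus, giving $\|f\|_\Omega^2=\sum_p|c_p|^2\|z^p\|_\Omega^2$, is the standard way to carry out that step, and it correctly completes the proof.
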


\begin{proof}
Denoting
$\pi(z)=X=(X_1,\ldots,X_n)\in\R_+^n$ 
for $z\in\C^n$,  
we have
\begin{equation*}
\int_{\Omega\setminus {\mathbb B}(R)} 
|z_1|^{2p_1} \cdots |z_n|^{2p_n}dV(z)
=
(2\pi)^n 
\int_{\widehat{\Omega}\setminus
\widehat{{\mathbb B}(R)}} 
X_1^{2 p_1+1}\cdots X_n^{2 p_n+1} 
dV(X).
\end{equation*}
Let $\rho>0$ and 
$\xi=(\xi_1,\ldots,\xi_n)\in S^+$.
When we express 
$X_j=\rho \, \xi_j$ for $j=1,\ldots,n$, 
$dV(X)=\rho^{n-1} d\sigma(\xi)\wedge d\rho$ holds.
Therefore, Fubini's theorem implies 
\begin{equation*}
\int_{\Omega\setminus {\mathbb B}(R)} 
|z_1|^{2p_1} \cdots |z_n|^{2p_n}dV(z)
=(2\pi)^n \int_1^{\infty} K(\rho) 
\rho^{2|p|+2n-1}d\rho, 
\end{equation*}
where 
$$
K(\rho)=\int_{U(\rho)} 
\xi_1^{2p_1+1}\cdots
\xi_n^{2p_n+1} d\sigma(\xi).
$$
Since $U(\rho)\subset S_{\varepsilon}^+$,
$
\xi_1^{2p_1}
\cdots
\xi_n^{2p_n} \approx 1$ for $\xi\in U(\rho)$, 
which implies 
\begin{equation*}
K(\rho)\approx
d\sigma(U(\rho))
\approx \frac{1}{\rho^{2a}}.
\end{equation*}
Therefore, 
we have 
\begin{equation}\label{eqn:3.4}
\begin{split}
&\|z^{2p}\|_{\Omega} <\infty
\Longleftrightarrow 
\int_R^{\infty} 
\rho^{2|p|+2n-1-2a}d\rho <\infty \\
&\quad \Longleftrightarrow
2|p|+2n-1-2a<-1
\Longleftrightarrow
|p|<a-n,
\end{split}
\end{equation}
which implies 
$A^2(\Omega)\supset 
{\rm Span}\{z^p:
p\in\Z_+^n, |p|< a-n
\}$.

Conversely, the opposite inclusion can be 
established in a similar manner 
as in the proof of the main theorem 
in \cite{Wie84}.

We remark that 
since $|p|$ is nonnegative, 
the value of $a$ must be greater than 
$n$ from (\ref{eqn:3.1}). 

\end{proof}

\begin{remark}
The pseudoconvexity of a domain 
has a significant influence on 
the dimension of its Bergman space. 
Wiegerinck \cite{Wie84} conjectured that 
property (\ref{eqn:2.1}) always holds 
for every pseudoconvex domain $\Omega$.
As far as we know, this conjecture remains 
unsolved at present, 
but it has already been affirmatively 
settled in many cases
(\cite{Zwo99, Zwo00, Eng07, Juc12, 
GHH17, PfZ17}).
In particular, 
it has been confirmed 
in the case of Reinhardt domains
(\cite{Zwo99, Zwo00, Eng07}). 
From these results,  
if the order is greater than $n$, 
then the respective horn domain
cannot be pseudoconvex. 
\end{remark}

\section{Holomorphic sectional curvatures 
of the Bergman metrics}

Let $\Omega$ be a domain in $\C^n$. 
The {\it Bergman kernel} $K_{\Omega}(z,w)$ of $\Omega$ 
is defined by 
$$
K_{\Omega}(z,w)=\sum_{\alpha} 
\phi_{\alpha}(z) \overline{\phi_{\alpha}(w)},
$$ 
where $\{\phi_{\alpha}\}_{\alpha}$ is a complete orthonormal 
basis of $A^2(\Omega)$.  
Its restriction to the diagonal, $K_{\Omega}(z,z)$,  
is denoted simply by $K_{\Omega}(z)$. 

It has recently been recognized 
that the issue of the dimension of $A^2(\Omega)$ is
closely related to that of 
constant holomorphic sectional curvatures
of the Bergman metric. 
Recently, 
Huang, Li and Treuer \cite{HLT25} has obtained 
several significant results on this topic.
In their paper, they show that the Bergman metric 
of the domain $D_{\rm HL}(\alpha)$ 
in (\ref{eqn:1.2}) has constant holomorphic 
sectional curvature equal to $2$. 
Their results can be widely generalized as follows.

\begin{theorem}
Let $\Omega\subset\C^n$ be a horn domain 
of order $a$.
If 
$n+1<a\leq n+2$,
then 
the Bergman metric of $\Omega$ has constant 
holomorphic sectional curvature equal to $2$.
\end{theorem}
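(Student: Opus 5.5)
By the preceding theorem on the dimensions of the Bergman spaces, the hypothesis $n+1<a\leq n+2$ corresponds to the case $k=1$ of (\ref{eqn:3.3}). Hence $\dim A^2(\Omega)=\binom{n+1}{n}=n+1$, and by (\ref{eqn:3.2})
\begin{equation*}
A^2(\Omega)={\rm Span}\{1,z_1,\ldots,z_n\},
\end{equation*}
since $|p|<a-n$ with $1<a-n\leq 2$ forces exactly $|p|\leq 1$. The plan is to compute the Bergman kernel from this explicit basis and recognize it as the pullback of the Fubini--Study data, or equivalently of the ball kernel, under a linear change of coordinates. The curvature computation then reduces to the known constant curvature of complex hyperbolic or projective space.

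First, we orthonormalize. Since $\Omega$ is Reinhardt, distinct monomials are orthogonal in $A^2(\Omega)$: integrating over the torus action kills $z^p\overline{z^q}$ for $p\neq q$. So $\{1,z_1,\ldots,z_n\}$ is already an orthogonal basis. With $c_0=\|1\|^2_\Omega$ and $c_j=\|z_j\|^2_\Omega$ (finite by (\ref{eqn:3.4})), we get
\begin{equation*}
K_\Omega(z)=\frac{1}{c_0}+\sum_{j=1}^n\frac{|z_j|^2}{c_j}
=\frac{1}{c_0}\Bigl(1+\sum_{j=1}^n|w_j|^2\Bigr),\qquad w_j:=\sqrt{c_0/c_j}\,z_j .
\end{equation*}
The Bergman metric is $\partial\bar\partial\log K_\Omega$. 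After the linear rescaling $z\mapsto w$, which is an isometry between the metric on $\Omega$ and the pulled-back metric $\partial\bar\partial\log(1+|w|^2)$, this is exactly the Fubini--Study metric on the affine chart $\C^n\subset\PP^n$. Note that $K_\Omega>0$ everywhere, so the Bergman metric is defined and positive definite on all of $\Omega$, even though $\Omega$ is unbounded.

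Next, we compute the holomorphic sectional curvature of $g=\partial\bar\partial\log(1+|w|^2)$. This is the standard Fubini--Study computation: the metric is Kähler with constant holomorphic sectional curvature. With the normalization $g_{j\bar k}=\partial_j\bar\partial_k\log(1+|w|^2)$, the curvature equals $2$. One can check this at $w=0$, using the $U(n+1)$-homogeneity, via $R_{j\bar k l\bar m}=-\partial_l\bar\partial_m g_{j\bar k}+g^{p\bar q}\partial_l g_{j\bar q}\bar\partial_m g_{p\bar k}$. At $w=0$ we have $g_{j\bar k}=\delta_{jk}$, and the second-order term yields $R_{j\bar k l\bar m}=\delta_{jk}\delta_{lm}+\delta_{jm}\delta_{lk}$. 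Hence $R(v,\bar v,v,\bar v)/|v|^4=2$. Since holomorphic sectional curvature is a local isometry invariant, the same holds at every point of $\Omega$.

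The main obstacle is not conceptual but one of conventions. The sign convention must agree with that of \cite{HLT25}, who obtain the value $2$ (not $-2$ and not $4$) for $D_{\rm HL}(\alpha)$. I would fix the convention by matching their stated result. The only genuinely analytic input is the orthogonality of monomials together with the exact identification of $A^2(\Omega)$. That identification rests on the reverse inclusion in the preceding theorem, which we take as given.
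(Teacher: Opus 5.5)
Your proposal is correct and follows the paper's proof essentially step for step: Theorem~3.1 gives $A^2(\Omega)={\rm Span}\{1,z_1,\ldots,z_n\}$, orthogonality of monomials on a Reinhardt domain gives $K_\Omega(z)=c_0^{-1}\bigl(1+\sum_j |w_j|^2\bigr)$ after a coordinatewise rescaling, and the Bergman metric is then the Fubini--Study metric on an open subset of the affine chart $\C^n\subset\PP^n$. Your explicit check at $w=0$, which gives $R_{j\bar k l\bar m}=\delta_{jk}\delta_{lm}+\delta_{jm}\delta_{lk}$ and hence curvature $2$, just fills in the standard fact the paper cites; your passing mention of the ball kernel is a slip, since the kernel here is of Fubini--Study type and not a power of $1-|w|^2$.
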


\begin{proof}
It follows from Theorem~3.1 that 
the condition 
$n+1<a\leq n+2$ is equivalent to 
$A^2(\Omega)={\rm Span}\{1,z_1,\ldots,z_n\}$. 
Therefore, 
the Bergman kernel of $\Omega$ can be written
as 
\begin{equation}\label{eqn:4.1}
K_{\Omega}(z)=c_0+\sum_{k=1}^n c_k |z_k|^2
\quad \mbox{ for $z\in \Omega$}, 
\end{equation}
where 
$$
c_0={\rm Vol} (\Omega)^{-1}, 
\quad 
c_k=\|z_k\|_{\Omega}^{-2}=
\left(\int_{\Omega} |z_k|^2 dV(z)\right)^{-1}.$$ 
Furthermore, we have
$$
K_{\Omega}(z)=
c_0\left(1+\sum_{k=1}^n \tilde{c}_k |z_k|^2\right)
\quad \mbox{ for $z\in \Omega$},
$$ 
where $\tilde{c}_k=c_k/c_0$ for
$k=1,\ldots,n$. 
Through
the scaling $(w_1,\ldots,w_n)=
(\sqrt{\tilde{c}_1}z_1,\ldots,
\sqrt{\tilde{c}_n}z_n)$,
the Bergman metric of $\Omega$ 
is isometric to the Fubini-Study metric
of $\PP^n$ restricted to 
a domain in the cell $\C^n$ with 
$$
\omega_{FS} =
\partial\bar{\partial} \log 
\left(
1+\sum_{k=1}^n |w_k|^2
\right). 
$$
Therefore, the Bergman metric of $\Omega$ 
has a constant holomorphic 
sectional curvature equal 
to $2$.
\end{proof}


\section{Holomorphic equivalence problem for horn domains}

A. Yamamori \cite{Yam25} obtained the following result 
on holomorphic equivalence problem 
for Reinhardt domains 
with very small Bergman spaces.

\begin{theorem}[\cite{Yam25}, see also \cite{HaK25}]
Let $\Omega_1, \Omega_2\subset\C^n$ be 
Reinhardt domains such that $0\in\Omega_j$ and
$A^2(\Omega_j)={\rm Span}\{1,z_1, \ldots,z_n\}$ for
$j=1,2$. 
If $\Omega_1$ and $\Omega_2$ are biholomorphic, 
then 
every biholomorphism $f:\Omega_1\to\Omega_2$ 
is linear. 
\end{theorem}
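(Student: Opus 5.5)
The plan is to use the transformation law of the Bergman kernel together with the explicit form of the kernel. Since $A^2(\Omega_j)={\rm Span}\{1,z_1,\ldots,z_n\}$ and $\Omega_j$ is Reinhardt, the monomials are mutually orthogonal. Hence, exactly as in (\ref{eqn:4.1}),
\[
K_{\Omega_j}(z,w)=c_0^{(j)}\Bigl(1+\sum_{k=1}^n \tilde c_k^{(j)} z_k\bar w_k\Bigr)
\]
with $c_0^{(j)},\tilde c_k^{(j)}>0$. After the diagonal scalings $z_k\mapsto\sqrt{\tilde c_k^{(j)}}\,z_k$, which are linear, I may assume $K_{\Omega_j}(z,w)=c_j(1+\langle z,w\rangle)$, where $\langle z,w\rangle=\sum_k z_k\bar w_k$. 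It then suffices to show that a biholomorphism $f:\Omega_1\to\Omega_2$ between the rescaled domains is linear.

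First, I use the transformation formula $K_{\Omega_1}(z,w)=J_f(z)\,K_{\Omega_2}(f(z),f(w))\,\overline{J_f(w)}$, where $J_f=\det f'$. Setting $w=0$ gives
\[
c_1=J_f(z)\,c_2\bigl(1+\langle f(z),f(0)\rangle\bigr)\overline{J_f(0)}.
\]
So $J_f(z)(1+\langle f(z),b\rangle)$ is constant, where $b=f(0)$.

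Second, I divide the full identity by the product of this relation at $z$ and at $w$. This yields
\[
\frac{1+\langle z,w\rangle}{c_1}\;=\;\frac{c_2\,|J_f(0)|^2}{c_1^2}\cdot\frac{1+\langle f(z),f(w)\rangle}{(1+\langle f(z),b\rangle)(1+\langle b,f(w)\rangle)}.
\]
Equivalently, $1+\langle z,w\rangle=\lambda\,\dfrac{1+\langle f(z),f(w)\rangle}{(1+\langle f(z),b\rangle)\overline{(1+\langle f(w),b\rangle)}}$ for some constant $\lambda>0$. Taking $w=0$ gives $\lambda(1+|b|^2)=1$.

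The key observation is then geometric. The Bergman metric is the pullback of Fubini--Study under the inclusion $\C^n\subset\PP^n$, and the identity above says that the map $[1:z]\mapsto[1:f(z)]$ preserves the Hermitian form $1+\langle\cdot,\cdot\rangle$ up to rescaling of representatives. Concretely, the vectors $v(z)=(1,f(z))/(1+\langle f(z),b\rangle)$ in $\C^{n+1}$ satisfy $\langle v(z),v(w)\rangle_{\C^{n+1}}=\lambda^{-1}\langle (1,z),(1,w)\rangle$. A map between spans that preserves inner products of a spanning family extends to a linear isometry $U$ (up to scale) of $\C^{n+1}$ with $U(1,z)=\sqrt{\lambda}^{-1}v(z)$, because $\{(1,z):z\in\Omega_1\}$ spans $\C^{n+1}$. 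Thus $f$ is a projective unitary transformation, $f(z)=(Az+\beta)/(\gamma\cdot z+\delta)$, a fractional linear map.

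The main obstacle is to rule out a nontrivial denominator, i.e.\ to show $\gamma=0$ and $\beta=0$. Here I would use that a projective linear map whose denominator is not constant has a polar hyperplane $\{\gamma\cdot z+\delta=0\}$ in $\C^n$. The map $f$ is holomorphic on $\Omega_1$, so $\Omega_1$ must avoid this hyperplane. At the same time, $f$ sends $\Omega_1$ onto $\Omega_2$ and must respect $L^2$-integrability: the pullback $f^*$ maps $A^2(\Omega_2)$ isometrically onto $A^2(\Omega_1)$ via $g\mapsto (g\circ f)J_f$. With $J_f=c(\gamma\cdot z+\delta)^{-(n+1)}$, the function $J_f\cdot 1$ must lie in ${\rm Span}\{1,z_1,\ldots,z_n\}$, i.e.\ be a polynomial of degree at most $1$. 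This forces $\gamma=0$, since $n+1\ge 2$. Then $J_f$ is constant, and the relation from the first step gives $\langle f(z),b\rangle$ constant. Because $f$ is a linear isomorphism up to translation, this forces $b=0$, so $f(0)=0$ and $\beta=0$. Hence $f$ is linear. Undoing the scalings shows that the original biholomorphism is linear.
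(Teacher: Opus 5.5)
The paper gives no proof of this theorem. It is quoted from \cite{Yam25} (see also \cite{HaK25}) and used only as a black box in the proof of the subsequent equivalence theorem for simple horn domains, so there is no in-paper argument to compare yours against.

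Taken on its own terms, your proof is correct. Each step goes through:
\begin{itemize}
\item the diagonal form of the kernel;
\item the identity $J_f(z)\bigl(1+\langle f(z),b\rangle\bigr)\equiv\mathrm{const}\neq 0$, which also justifies dividing by $1+\langle f(z),b\rangle$;
\item the Gram-matrix extension to a unitary $U$ of $\C^{n+1}$, valid because $\{(1,z):z\in\Omega_1\}$ spans $\C^{n+1}$;
\item the use of $J_f\in A^2(\Omega_1)$ (its norm is ${\rm Vol}(\Omega_2)^{1/2}$) to force $\gamma=0$;
\item the final deduction $b=0$.
\end{itemize}
Only the constants are mishandled. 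Setting $w=0$ in your identity gives $\lambda=1+|b|^2$, not $\lambda(1+|b|^2)=1$, and the isometry is $U(1,z)=\sqrt{\lambda}\,v(z)$. Since only $\lambda>0$ is used later, nothing downstream is affected.

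Your route is longer than it needs to be, in two independent ways.

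First, the unitary pull-back $T_f g=(g\circ f)J_f$ from $A^2(\Omega_2)$ to $A^2(\Omega_1)$, which you bring in only at the end, already does most of the work. Applied to $1,w_1,\ldots,w_n$, it gives $J_f=\ell_0$ and $f_kJ_f=\ell_k$, where $\ell_0,\ldots,\ell_n$ form a basis of ${\rm Span}\{1,z_1,\ldots,z_n\}$. Hence $f=(\ell_1,\ldots,\ell_n)/\ell_0$ is a nondegenerate linear fractional map, with no Fubini--Study or Gram-matrix argument. Then $\ell_0=J_f=\det M/\ell_0^{\,n+1}$ makes $\ell_0^{\,n+2}$ constant, so $f$ is affine, and your $w=0$ step gives $f(0)=0$.

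Second, within your own framework the $A^2$ step can be dropped. Compare the first component of $U(1,z)=\sqrt{\lambda}\,v(z)$ with your first step: this shows $J_f$ is a constant multiple of the denominator $\gamma\cdot z+\delta$. Against $J_f=\det U/(\gamma\cdot z+\delta)^{n+1}$, that forces $\gamma=0$. Unitarity of $U$ then gives $\beta=0$ at once. Here $A,\beta,\gamma,\delta$ are the blocks of $U$, and the columns $(0,Ae_j)$ are orthonormal and orthogonal to $(\delta,\beta)$.

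What your longer route buys is the conceptual picture. It exhibits $f$ as the restriction of an isometry of $(\PP^n,\omega_{FS})$, which is precisely the link between this linearity statement and the constant-curvature phenomenon of Theorem~4.1.
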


In the case of simple horn domains, 
in addition to the linearity of biholomorphisms, 
their structure is completely 
determined up to permutation 
and coordinate-wise scaling:

\begin{proposition}
Let $\Omega_1, \Omega_2\subset\C^n$ be
simple horn domains 
that are biholomorphic to each other.   
If a biholomorphism $F : \Omega_1\to\Omega_2$ 
is linear, then
$F$ can be written in the form
\begin{equation}\label{eqn:5.1}
F(z) = 
\bigl(
C_1 z_{\sigma(1)}, \dots, C_n z_{\sigma(n)}
\bigr),
\end{equation}
where each $C_j$ is a nonzero complex number and 
$\sigma$ is a permutation of $\{1,\dots,n\}$ 
(i.e., $\sigma \in S_n$).
\end{proposition}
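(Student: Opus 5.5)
We view $F$ as an invertible matrix $M=(m_{jk})\in GL_n(\C)$ with $F(\Omega_1)=\Omega_2$, and show that every row and every column of $M$ has exactly one nonzero entry; this is precisely the form (\ref{eqn:5.1}). The idea is to use the asymptotic geometry of the horns. By condition (iii), $\Omega_1$ contains points $z$ with $\pi(z)$ close to the ray $\R_{>0}X^*$ and $|z|_\C\to\infty$. By condition (ii), with $U(\rho)\subset S^+_\epsilon$, $\Omega_2$ lies at infinity inside the cone over $S_\epsilon^+$, whose angular cross-section shrinks. We test which linear images of such ``far'' subsets of $\Omega_1$ can lie in $\Omega_2$.

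\emph{Step 1 (Reinhardt fibres at infinity).} Fix $\rho$ large and $X\in\rho U_1(\rho)$ near $\rho X^*_1$. The whole torus $T_X:=\{(X_1e^{i\theta_1},\ldots,X_ne^{i\theta_n})\}$ lies in $\Omega_1$, so $M T_X\subset\Omega_2$. In particular $|(Mz)|_\C\approx\rho$ on $T_X$, and for large $\rho$ we get $\pi(Mz)/|Mz|_\C \in U_2(\rho')$ with $\rho'\approx\rho$. Since $\bigcap U_2(\rho')=\{X_2^*\}$ and $d\sigma(U_2(\rho'))\to0$, I expect these sets to shrink to the point $X_2^*$ (this is the step needing care; if the definition only gives shrinking volume, I would use the nested/intersection condition (iii) together with connectedness to argue that the diameters of the $U_2(\rho')$, or at least the sets hit by the compact images of tori, converge to $X_2^*$). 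Consequently, for every $j$ the function
$$
\theta\mapsto \frac{\bigl|\sum_k m_{jk}X_ke^{i\theta_k}\bigr|}{\rho}
$$
must be asymptotically constant, equal to $(X_2^*)_j>0$, uniformly in $\theta\in(\R/2\pi\Z)^n$ as $\rho\to\infty$.

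\emph{Step 2 (one nonzero entry per row).} Divide by $\rho$ and let $\rho\to\infty$, so $X/\rho\to X_1^*$ with all components $\ge\epsilon$. We get that $\bigl|\sum_k m_{jk}(X_1^*)_ke^{i\theta_k}\bigr|$ is independent of $\theta$. Expanding the square gives $\sum_k|m_{jk}|^2(X_1^*)_k^2+2\,\mathrm{Re}\sum_{k<l}m_{jk}\overline{m_{jl}}(X_1^*)_k(X_1^*)_le^{i(\theta_k-\theta_l)}$. Constancy in $\theta$ forces every cross coefficient $m_{jk}\overline{m_{jl}}(X_1^*)_k(X_1^*)_l$ to vanish, and since $(X_1^*)_k>0$ we get $m_{jk}m_{jl}=0$ for $k\neq l$. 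Hence each row has at most one nonzero entry. By invertibility, each row has exactly one nonzero entry, and these entries lie in distinct columns. Therefore $M$ is a permutation matrix times a diagonal matrix, i.e. $F(z)=(C_1z_{\sigma(1)},\ldots,C_nz_{\sigma(n)})$ with $C_j\neq0$.

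\emph{Main obstacle.} The key point is Step 1: we must justify that the normalized image directions of the far tori converge to a single point. Equivalently, we need that the angular sets of $\Omega_2$ near infinity concentrate at $X_2^*$, and that the image under $M$ of the far part of the horn of $\Omega_1$ really lands in the far part of $\Omega_2$ (which is immediate since $M$ is invertible, so $|Mz|\approx|z|$). If concentration of $U_2(\rho)$ is not directly available, I would argue by volume instead. A nontrivial cross term makes $\pi(MT_X)/\rho$ sweep an $(n-1)$-parameter family of directions of diameter bounded below. By connectedness of $U_2$ together with the simplicity condition, and arguing by contradiction along a sequence $\rho\to\infty$, this would yield two distinct points in $\bigcap_\rho U_2(\rho)$, contradicting (iii).
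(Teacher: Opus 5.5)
\textbf{There is a genuine gap, in Step 1.} It occurs where you pass from ``the image directions concentrate at $X_2^*$'' to ``each row modulus divided by $\rho$ tends to $(X_2^*)_j$ uniformly in $\theta$''. Write $\sigma_\theta(z)=(e^{i\theta_1}z_1,\ldots,e^{i\theta_n}z_n)$. What simplicity actually gives is only
\[
\pi(M\sigma_\theta X_1^*)=\lambda(\theta)\,X_2^*,\qquad \lambda(\theta)=|M\sigma_\theta X_1^*|_{\C}.
\]
You can obtain this without any shrinking of diameters (which need not occur for $n\ge 3$) by using exact rays, as the paper does. Since $tX_1^*\in\widehat{\Omega}_1$ for all $t\ge R_1$, we get $t\,\pi(M\sigma_\theta X_1^*)\in\widehat{\Omega}_2$ for all large $t$. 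A ray eventually inside $\widehat{\Omega}_2$ must point in the direction $X_2^*$, with (iii) understood for all sufficiently large $\rho$. You normalized by $\rho$ where you should have normalized by $|Mz|_{\C}$. The relation $|Mz|_{\C}\approx\rho$ is only a two-sided bound, and $\lambda(\theta)$ genuinely depends on $\theta$. Hence Step 2 only shows that the \emph{ratios} of the row moduli are $\theta$-independent, i.e.\ $m_{jk}\overline{m_{jl}}=c_{jj'}\,m_{j'k}\overline{m_{j'l}}$ for $k\neq l$. That does not force one nonzero entry per row. Your fallback fails for the same reason: nonzero cross terms need not spread the \emph{directions} of $\pi(MT_X)$ at all, only their lengths.

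\textbf{The statement is false as written, so no argument can close this gap.} Let $\Omega=\{z\in\C^2:\bigl||z_1|^2-|z_2|^2\bigr|<1\}$ (the formula for $D_{\rm HL}(\alpha)$ at $\alpha=0$). It contains $\mathbb{B}(1)$. For $\rho\ge 2$ the set $U(\rho)=\{\xi\in S^+:|\xi_1^2-\xi_2^2|<\rho^{-2}\}$ is an open arc centred at $X^*=(1,1)/\sqrt2$, of length $\arcsin(\rho^{-2})\approx\rho^{-2}$. It lies in $S^+_\epsilon$ with $\epsilon=\sqrt{3/8}$, and $\bigcap_{\rho\ge 2}U(\rho)=\{X^*\}$. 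So $\Omega$ is a simple horn domain of order $1$. Take $M=\frac{1}{\sqrt3}\begin{pmatrix}1&2\\2&1\end{pmatrix}$. One checks $|(Mz)_1|^2-|(Mz)_2|^2=-(|z_1|^2-|z_2|^2)$, so $M$ is a linear automorphism of $\Omega$ with no zero entry. Concretely, $|e^{i\theta_1}+2e^{i\theta_2}|=|2e^{i\theta_1}+e^{i\theta_2}|$ for all $\theta$, which is exactly the situation your Step 2 overlooks. The paper's proof breaks at the corresponding point. Here the normalized map is $F^*=M$, which has nonnegative entries, but $G^*=M^{-1}=-\frac{1}{\sqrt3}\begin{pmatrix}1&-2\\-2&1\end{pmatrix}$ does not, contrary to what the paper asserts.

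\textbf{What repairs it.} The missing ingredient is an isometry hypothesis. Suppose $F$ carries a positive definite diagonal form $|z|_\beta^2=\sum_k\beta_k|z_k|^2$ to another one $|w|_\gamma^2=\sum_k\gamma_k|w_k|^2$. Taking this norm in the display gives $\lambda(\theta)\,|X_2^*|_\gamma=|X_1^*|_\beta$, so $\lambda$ is constant. Your Fourier-coefficient computation in Step 2 then finishes the proof, more cleanly than the paper's nonnegative-matrix argument. The hypothesis does hold where the paper applies the proposition ($n+1<a_1,a_2\le n+2$). There $K_{\Omega_j}(z)=c_0^{(j)}\bigl(1+\sum_k\tilde c_k^{(j)}|z_k|^2\bigr)$, and $K_{\Omega_1}(z)=|\det F|^2K_{\Omega_2}(Fz)$ together with $F(0)=0$ yields $\sum_k\tilde c_k^{(1)}|z_k|^2=\sum_k\tilde c_k^{(2)}|(Fz)_k|^2$.
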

\begin{proof}
For $X=(X_1, \ldots, X_n)\in\R_{>0}^n$, 
we define
\begin{equation*}
\begin{split}
&L_{\R}(X):=
\{t X=(t X_1, \ldots, t X_n)\in\R_+^n: t\geq 0\}, \\
&L_{\C}(X):=
\{\zeta X=(\zeta X_1, \ldots, \zeta X_n) \in \C^n:
\zeta\in \C\}.
\end{split}
\end{equation*}
For $\theta=(\theta_1,\ldots,\theta_n)\in\R^n$, 
let $\sigma_{\theta}:\C^n\to\C^n$ be the map defined by 
\begin{equation*}
\sigma_{\theta}(z)=
(e^{i\theta_1} z_1,\ldots, e^{i\theta_n} z_n). 
\end{equation*}

For $j=1,2$, the domain
$\Omega_j$ satisfies the conditions 
(i), (ii) in Definition~2.1   
with $R=R_j$, $U(\rho)=U_j(\rho)$, 
respectively. 
Moreover, $\Omega_1$ satisfies 
the simplicity condition (iii),  that is, 
there exists a point $X^*\in S_{\epsilon}^+$ 
such that 
\begin{equation*}
\bigcap_{\rho\geq R_1} U_1(\rho)
=\{X^*\}=\{(X_1^*,\ldots,X_n^*)\}. 
\end{equation*}
We then observe that 
\begin{equation}\label{eqn:5.2}
L_{\R}(X^*)\subset \widehat{\Omega}_1, \quad
L_{\C}(X^*)\subset \Omega_1. 
\end{equation}
We write $w=F(z)$ in coordinates as 
\begin{equation}\label{eqn:5.3}
w_j=\sum_{k=1}^n C_{jk} z_k \quad 
\mbox{ for } j=1,\ldots, n, 
\end{equation}
where each $C_{jk}$ is a complex number. 
Let $Y^*=(Y_1^*,\ldots,Y_n^*)\in\R_{>0}^n$ 
be the point defined by 
\begin{equation*}
Y_j^*=\sum_{k=1}^n |C_{jk}| X_k^* 
\quad \mbox{for $j=1,\ldots,n$}.
\end{equation*}
The following lemma 
plays a crucial role 
in the proof of the proposition. 
\begin{lemma}
There exist $\theta^*, \varphi^*\in [0,2\pi)^n$ 
such that 
\begin{equation}\label{eqn:5.4}
(F\circ\sigma_{\theta^*})(X^*)=\sigma_{\varphi^*}(Y^*). 
\end{equation}
\end{lemma}
\begin{proof}
From (\ref{eqn:5.3}), we have
\begin{equation*} 
(F\circ\sigma_{\theta})(X^*) =
\left(
\sum_{k=1}^n C_{1k} e^{i\theta_k} X_k^*, 
\ldots, 
\sum_{k=1}^n C_{nk} e^{i\theta_k} X_k^*
\right).
\end{equation*} 
Moreover, for each $j=1,\ldots, n$, 
we obtain
\begin{equation*}
\left|
\sum_{k=1}^n C_{jk} e^{i\theta_k} X_k^*
\right|
\leq 
\sum_{k=1}^n |C_{jk}| X_k^*=Y_j^*.
\end{equation*}
Therefore, for any $\theta\in [0,2\pi)^n$ and
$j=1,\ldots,n$, 
there exist $\rho_j(\theta)\in(0,1]$ and 
$\varphi_j(\theta)\in[0,2\pi)$ such that 
\begin{equation}\label{eqn:5.5}
\sum_{k=1}^n C_{jk} e^{i\theta_k} X_k^*=
\rho_j(\theta) e^{i \varphi_j(\theta)} Y_j^*.
\end{equation}
Since $\sigma_{\theta}(tX^*) 
\in\Omega_1$ for all 
$t>0$, $\theta\in[0,2\pi)^n$ 
from (\ref{eqn:5.2}), 
we obtain
\begin{equation*}
\bigl(
t\rho_1(\theta) e^{i\varphi_1(\theta)}Y_1^*, 
\ldots, 
t\rho_n(\theta) e^{i\varphi_n(\theta)}Y_n^*
\bigr)
\in \Omega_2 \quad 
\text{ for all } t>0, \theta\in[0,2\pi)^n.
\end{equation*}

For $j=1,\ldots, n$, define 
$\theta^j=(\theta^j_1,\ldots,\theta^j_n)
\in
[0,2\pi)^n$ 
by 
\begin{equation}\label{eqn:5.6}
\theta^j_{k}:=
\begin{cases}
&\!\!\!\!\! -\arg(C_{jk}), 
\quad \text{ if } C_{jk}\neq 0, \\
& 0, \quad \text{ if } C_{jk}=0.
\end{cases}
\end{equation}
Then $\rho_j$ and $\varphi_j$ in (\ref{eqn:5.5})
satisfy 
$\rho_j(\theta^j)=1$ and $\varphi_j(\theta^j)=0$, 
that is,  
\begin{equation}\label{eqn:5.7}
\sum_{k=1}^n C_{jk} e^{i\theta^j_{k}} X_k^*
=
Y_j^* \quad 
\text{\; for } j=1,\ldots, n.
\end{equation}
For $l=1,\ldots,n$, set
\begin{equation*}
Y^l=(
\rho_1(\theta^l)Y_1^*,\ldots, 
\rho_n(\theta^l)Y_n^*).
\end{equation*}
Since $L_{\R}(Y^l)\subset \widehat{\Omega}_2$ 
for all $l$, 
the simplicity condition of $\Omega_2$ 
implies that 
$Y^l=a_{lk} Y^k$ with positive constants $a_{lk}$ 
for all $k, l$. 
Moreover, using $\rho_j(\theta^j)=1$ for all $j$, 
we conclude that 
$$
\rho_k(\theta^l)=1 \quad 
\text{ for all } k, l, 
$$
which is equivalent to 
$Y^1=\cdots=Y^n=Y^*$. 
Therefore, setting 
$\theta^*=\theta^1$ and 
$\varphi^*=(\varphi_1(\theta^1),\ldots,
\varphi_n(\theta^1))$ 
yields the assertion. 
\end{proof}

Since
$\sigma_{\theta}(\zeta X^*)\in \Omega_1$ 
for all $\theta\in[0,2\pi)^n$, 
$\zeta\in\C$ from (\ref{eqn:5.2}), 
Lemma~5.3 implies that 
$$
\zeta Y^*
=\zeta (\sigma_{\varphi^*}^{-1}
\circ F \circ \sigma_{\theta^*})(X^*) =
\sigma_{\varphi^*}^{-1}
(F(\sigma_{\theta^*}(\zeta X^*)))
\in\Omega_2 \quad 
\mbox{ for all } \zeta\in\C. 
$$
Consequently, 
\begin{equation}\label{eqn:5.8}
\bigcap_{\rho>R_2} U_2(\rho)=
\left\{\frac{Y^*}{|Y^*|}\right\}, 
\quad
L_{\R}(Y^*)\subset \widehat{\Omega}_2, 
\quad 
L_{\C}(Y^*)\subset {\Omega}_2. 
\end{equation}
We define the biholomorphism
$F^*:\Omega_1\to\Omega_2$ by 
\begin{equation*}
F^*:=
\sigma_{\varphi^*}\circ 
F \circ \sigma_{\theta^*}^{-1},
\end{equation*}
so that the following diagram commutes:
\begin{equation*}
\begin{tikzcd}
\Omega_1 \arrow[r,"F"] \arrow[d,"\sigma_{\theta^*}"'] 
& \Omega_2 \arrow[d,"\sigma_{\varphi^*}"] \\
\Omega_1 \arrow[r,"F^*"] 
& \Omega_2
\arrow[phantom, from=1-1, to=2-2, "\circlearrowright"]
\end{tikzcd}
\end{equation*}
It follows from (\ref{eqn:5.4}) in Lemma~5.3 
that  
$Y^*=F^*(X^*)$, 
which implies that 
$w=F^*(z)$ can be written in coordinates as 
\begin{equation*}
w_j=\sum_{k=1}^n |C_{jk}| z_k \quad 
\mbox{\; for } j=1,\ldots, n.
\end{equation*}
We remark that 
$F^*$ can be extended to 
a biholomorphism from 
$\C^n$ to $\C^n$ and 
that 
the restriction of $F^*$ 
to $\R^n$ is bijective from 
$\R^n$ to $\R^n$. 

On the other hand, 
by considering similarly 
linear biholomorphisms from 
$\Omega_2$ to $\Omega_1$, 
we obtain analogous results to 
those of Lemma~5.3.
Let $G^*$ denote the inverse of $F^*$. 
Arguing as in the case of $F^*$, 
we can see 
that $z=G^*(w)$ can be written 
in coordinates as  
\begin{equation*}
z_j=\sum_{k=1}^n D_{jk} w_k 
\quad \mbox{ for $j=1,\ldots,n$},
\end{equation*}
where each $D_{jk}$ is a nonnegative number. 
The equation $F^*\circ G^*={\rm Id}$ implies that 
\begin{equation*}
\sum_{l=1}^n |C_{jl}| D_{lk} =\delta_{jk}, 
\end{equation*}
where $\delta_{jk}=1$ if $j=k$ or $=0$ if $j\neq k$. 
Therefore, 
for each $j$, there exists a unique $k$ such that 
$C_{jk}\neq 0$ and $C_{jl}=0$ for all $l\neq k$.
Since the $n\times n$ matrix $(|C_{jk}|)_{jk}$ 
is invertible, 
the correspondence $j\mapsto k$ defines 
a permutation $\sigma\in S_n$. 
Consequently, $F$ can be written in the form
\[
F(z)
=
\bigl(
C_1 z_{\sigma(1)}, \ldots, C_n z_{\sigma(n)}
\bigr),
\]
where each $C_j \neq 0$ and
$\sigma \in S_n$.
This completes the proof of the proposition.
\end{proof}



From Theorem~5.1 and Proposition~5.2,  
we can obtain the following result.

\begin{theorem}
Let $n+1 < a_1, a_2 \leq n+2$. 
Let $\Omega_1, \Omega_2 \subset \C^n$ be 
simple horn domains of orders $a_1$, $a_2$, 
respectively.
If $\Omega_1$ and $\Omega_2$ are 
biholomorphic, then
every biholomorphism $F:\Omega_1\to\Omega_2$ 
can be written as in the form (\ref{eqn:5.1}) and 
$a_1 = a_2$.
\end{theorem}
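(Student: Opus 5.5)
The plan is to chain the earlier results and then settle the equality of orders with a volume-growth argument.

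\emph{Step 1: linearity and the form (\ref{eqn:5.1}).} Since $n+1<a_j\le n+2$, Theorem~3.1 gives $A^2(\Omega_j)={\rm Span}\{1,z_1,\ldots,z_n\}$ for $j=1,2$. Each $\Omega_j$ contains ${\mathbb B}(\delta)$ by condition (i), so $0\in\Omega_j$. Theorem~5.1 then says every biholomorphism $F:\Omega_1\to\Omega_2$ is linear. Proposition~5.2, applied to the simple horn domains $\Omega_1,\Omega_2$, shows $F(z)=(C_1z_{\sigma(1)},\ldots,C_nz_{\sigma(n)})$ with $C_j\neq0$ and $\sigma\in S_n$.

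\emph{Step 2: $a_1=a_2$.} I use that $F$ is a monomial-type linear map, so it maps Reinhardt domains to Reinhardt domains. On the level of $\widehat{\Omega}$, $F$ induces the map $\widehat F(X)=(|C_1|X_{\sigma(1)},\ldots,|C_n|X_{\sigma(n)})$, with $\widehat F(\widehat\Omega_1)=\widehat\Omega_2$. This is a linear isomorphism of $\R^n$ preserving $\R^n_+$. Hence there is a constant $\lambda\ge1$ with $\lambda^{-1}|X|_\R\le|\widehat F(X)|_\R\le\lambda|X|_\R$.

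I compare the growth of the quantity
\[
V_j(r):={\rm Vol}_{\R^n}\bigl(\widehat\Omega_j\cap\{r\le|X|_\R\le 2r\}\bigr).
\]
By (\ref{eqn:2.1})--(\ref{eqn:2.2}) and polar coordinates $dV=\rho^{n-1}d\sigma\,d\rho$, one gets $V_j(r)\approx\int_r^{2r}\rho^{n-1-2a_j}d\rho\approx r^{n-2a_j}$. Alternatively, use the shells $\{|X|_\R\ge r\}$, where the tail volume is $\approx r^{n-2a_j}$ because $2a_j>n$.

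Since $\widehat F$ has constant Jacobian $J=\prod|C_j|>0$, we have
\[
\widehat F\bigl(\widehat\Omega_1\cap\{|X|\ge \lambda r\}\bigr)\subset\widehat\Omega_2\cap\{|Y|\ge r\}.
\]
This gives $J\,(\lambda r)^{n-2a_1}\lesssim r^{n-2a_2}$ for large $r$. The symmetric argument with $\widehat F^{-1}$ gives the reverse inequality. Letting $r\to\infty$ yields $a_1=a_2$.

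As a sanity check, I could also use the $L^2$-norms of $z^p$ via Theorem~3.1, but that only detects $\lfloor\cdot\rfloor$-type information, so the volume comparison is the right tool.

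\emph{Main obstacle.} The main difficulty is making the tail-volume asymptotic rigorous. One must check that the sets $\rho U(\rho)$ in (\ref{eqn:2.1}) are exactly the spherical slices $\widehat\Omega\cap\rho S^+$, so that the polar-coordinate integral really computes $V_j$. Once that is granted, the comparison under the linear map with bounded distortion is routine.
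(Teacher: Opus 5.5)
Your proof is correct. Step~1 is exactly the paper's argument, and you even record the check $0\in\Omega_j$ that Theorem~5.1 needs and the paper leaves implicit. For $a_1=a_2$ you take a different route.

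Both arguments have to cope with the fact that $\widehat F$ does not map spheres to spheres, so the slices $\rho U_1(\rho)$ are not carried onto slices of $\widehat\Omega_2$. The paper normalizes $\sigma={\rm id}$ and passes to the hyperplane description (ii') of Remark~2.2(3). A diagonal map sends $\{X_n=t\}$ onto $\{X_n=c_nt\}$, so $\hat F(tV(t))=sW(s)$ holds exactly. Then ${\rm Vol}_{\R^{n-1}}(W(s))$ is a fixed multiple of ${\rm Vol}_{\R^{n-1}}(V(s/c_n))$, and the order is read off from (\ref{eqn:2.4}). You instead integrate in $\rho$: tail regions $\{|X|_{\R}\ge r\}$ are preserved up to the factor $\lambda$, and the constant Jacobian turns this into two-sided comparisons of $r^{n-2a_1}$ with $r^{n-2a_2}$, which force $a_1=a_2$.

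The paper's version gives an exact relation between cross-sections, hence explicit constants $B_1,B_2$. However, it depends on (ii) and (ii') being equivalent with the same exponent, which the remark only asserts. Your version works directly with the defining condition (ii), needs no normalization of $\sigma$, and uses only the bounded distortion of $\widehat F$. In fact, you could run the same comparison with the tail volume of $\Omega_j$ itself in $\C^n$, which is $\approx r^{2n-2a_j}$ by the $p=0$ case of the computation in the proof of Theorem~3.1. This shows $a_1=a_2$ for every linear biholomorphism, so this half of the theorem needs neither Proposition~5.2 nor simplicity.

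Finally, the step you flag as the main obstacle is immediate. The sets $\rho U(\rho)$ lie on pairwise disjoint spheres $\rho S^+$, so (\ref{eqn:2.1}) already gives $\widehat\Omega\cap\rho S^+=\rho U(\rho)$ for $\rho\ge R$, and the polar-coordinate formula applies as you wrote it.
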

\begin{proof}
Let $F:\Omega_1\to\Omega_2$ 
be a biholomorphism. 
Applying Theorem~3.1 to Theorem~5.1, 
we see that $F$ is linear. 
Moreover, Proposition~5.2 implies that
$F$ can be written as in the form (\ref{eqn:5.1}).
Without loss of generality,
we may assume that the permutation 
in Proposition~5.2 satisfies 
$\sigma(j)=j$ for $j=1,\ldots, n$.

Let $c_j:=|C_j|$ for $j=1,\ldots,n$, where 
each $C_j$ is as in (\ref{eqn:5.1}).  
Define a map $\hat{F}: \R^n_+\to \R_+^n$ by
$$\hat{F}(X_1\ldots, X_n):=(c_1 X_1,\ldots, c_n X_n). $$
This induced map 
$\hat{F}:\widehat{\Omega}_1\to \widehat{\Omega}_2$ 
is bijective.   
For positive numbers $t$ and $s$,  set 
\begin{equation*}
V(t)=\frac{1}{t}(\widehat{\Omega}_1 \cap t H_+), 
\qquad
W(s)=\frac{1}{s}
(\widehat{\Omega}_2 \cap s H_+), 
\end{equation*}
where $H_+$ is as in (\ref{eqn:1.2}).
When $s=c_n t$, we have
$$
\hat{F}(t V(t))=s W(s).
$$
Therefore, 
\begin{equation*}
\begin{split}
&{\rm Vol}_{\R^{n-1}}(W(s))=
{\rm Vol}_{\R^{n-1}}\left(\frac{1}{s}\hat{F}(tV(t))\right)=
{\rm Vol}_{\R^{n-1}}\left(\frac{1}{c_n t} t\hat{F}(V(t))\right) \\
&\quad\quad
=\frac{1}{c_n}{\rm Vol}_{\R^{n-1}}(\hat{F}(V(t))) 
=\frac{1}{c_n}\prod_{j=1}^{n-1} c_j \cdot
{\rm Vol}_{\R^{n-1}}(V(s/c_n)).
\end{split}
\end{equation*}
Since the order of $\Omega_1$ is $a_1$,   
the quantity ${\rm Vol}_{\R^{n-1}}(V(t))$ is estimated 
as in (\ref{eqn:2.4}) with $a=a_1$. 
The above equalities then imply  that, 
for sufficiently large $s$, 
\begin{equation*}
\frac{B_1}{s^{2a_1}}
\leq 
{\rm Vol}_{\R^{n-1}}(W(s))
\leq 
\frac{B_2}{s^{2a_1}}, 
\end{equation*}
where 
$$
B_j=A_j \cdot 
c_n^{2a_1-1} \prod_{k=1}^{n-1} c_k \;\;
\mbox{ for $j=1,2$.}
$$ 
Hence the order of $\Omega_2$ is also $a_1$. 
Thus, $a_1=a_2$. 
\end{proof}

From Lemma~2.3, Theorem~5.5 implies
the following result.

\begin{corollary}
Let $\alpha,\alpha'\in\R_{>0}^{n-1}$ satisfy
$2<\langle\alpha \rangle, \langle\alpha'\rangle \leq 3$. 
Let $\Omega\in{\mathcal H}_n(\alpha)$ and 
$\Omega' \in{\mathcal H}_n(\alpha')$. 
If $\Omega$ and $\Omega' $ are 
biholomorphic, then every biholomorphism 
$F:\Omega_1\to\Omega_2$ can be written 
as in the form (\ref{eqn:5.1})
and $\langle\alpha\rangle=\langle\alpha'\rangle$.
\end{corollary}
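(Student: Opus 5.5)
The corollary should follow by reducing it to Theorem~5.5 through Lemma~2.3. By Lemma~2.3, every $\Omega\in{\mathcal H}_n(\alpha)$ is a simple horn domain of order $a:=\langle\alpha\rangle+n-1$. Likewise $\Omega'\in{\mathcal H}_n(\alpha')$ is a simple horn domain of order $a':=\langle\alpha'\rangle+n-1$. So the first step is to record these two orders and check that they lie in the range required by Theorem~5.5.

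The second step is the arithmetic. Theorem~5.5 needs $n+1<a\leq n+2$, which is equivalent to $2<\langle\alpha\rangle+n-1-(n-1)\leq 3$, i.e. $2<\langle\alpha\rangle\leq 3$. This is exactly the hypothesis of the corollary, and the same holds for $\alpha'$. Hence $n+1<a,a'\leq n+2$, and Theorem~5.5 applies to the pair $(\Omega,\Omega')$ (which play the roles of $\Omega_1,\Omega_2$ in the displayed conclusion).

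The third step is to read off the conclusion of Theorem~5.5. Every biholomorphism $F:\Omega\to\Omega'$ has the form (\ref{eqn:5.1}), and $a=a'$. Then $\langle\alpha\rangle+n-1=\langle\alpha'\rangle+n-1$, so $\langle\alpha\rangle=\langle\alpha'\rangle$.

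The only point needing care is that Lemma~2.3 is applied in full generality to $\Omega\in{\mathcal H}_n(\alpha)$, not only to the model domain $D(\alpha)$. The lemma's proof treats only $D(\alpha)$ explicitly, via the comparison $c_1D(\alpha)\subset\Omega\subset c_2D(\alpha)$ and the connectedness assumption on the slices. I will take Lemma~2.3 as stated, including that the simplicity point and the order are inherited by $\Omega$, so no further obstacle remains.
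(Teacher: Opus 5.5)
Your proposal is correct and is exactly the paper's argument. Lemma~2.3 gives the orders $a=\langle\alpha\rangle+n-1$ and $a'=\langle\alpha'\rangle+n-1$, the hypothesis $2<\langle\alpha\rangle,\langle\alpha'\rangle\le 3$ is precisely $n+1<a,a'\le n+2$, and the theorem immediately preceding the corollary yields the form (\ref{eqn:5.1}) and $a=a'$, hence $\langle\alpha\rangle=\langle\alpha'\rangle$; your caveat about Lemma~2.3 is harmless, since the sandwich $c_1D(\alpha)\subset\Omega\subset c_2D(\alpha)$ transfers the ball condition, the slice-volume bounds (the slice of $cD(\alpha)$ at radius $\rho$ is that of $D(\alpha)$ at radius $\rho/c$), the inclusion in $S_\epsilon^+$, and the diagonal simplicity point from $D(\alpha)$ to $\Omega$.
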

In particular, the above corollary 
implies that, when $2<\alpha, \alpha'\leq 3$, 
$D_{\rm HL}(\alpha)$ and $D_{\rm HL}(\alpha')$ 
are biholomorphic, then $\alpha=\alpha'$, 
which is shown in \cite{Yam25}. 
Of course, in the cases of 
$D_{\rm Wie}(\alpha)$ and $D(\alpha)$ in Section~2, 
the same results can also be obtained.  


\section{Remarks on the definition of horn domains}

In the definition of horn domains in Section~2,
the essential requirement is 
to describe the behavior of $d\sigma(U(\rho))$ 
as $\rho\to\infty$. 
Let us recall the condition 
on $d\sigma(U(\rho))$:
there exist positive constants
$A_1$, $A_2$ such that
\begin{equation}\label{eqn:6.1}
\frac{A_1}{\rho^{2a}}\leq 
d\sigma(U(\rho))\leq 
\frac{A_2}{\rho^{2a}}
\quad 
\mbox{for $\rho\geq R$.}
\end{equation}
This behavior of 
$d\sigma(U(\rho))$ 
may be treated in a variety of broader settings.
Here we consider the following two cases.

\smallskip 
(1) \;\;
The following condition is a 
slight generalization of (\ref{eqn:6.1}).
Let $a\in\R_{>0}$ and $b\in\R$. 
Consider the condition: 
there exist positive constants 
$A_1$ and $A_2$ 
such that
\begin{equation}\label{eqn:6.2}
\frac{A_1}{\rho^{2a} (\log \rho)^b}
\leq
d\sigma(U(\rho))
\leq
\frac{A_2}{\rho^{2a} (\log \rho)^b}
\quad 
\mbox{for $\rho\geq R$.}
\end{equation}
If the condition~(\ref{eqn:6.1}) 
in the definition of horn domains 
is replaced by the condition~(\ref{eqn:6.2}), 
then the Bergman space $A^2(\Omega)$ 
of the corresponding horn domain 
$\Omega$ satisfies the following:
\begin{enumerate}
\item
If $b\leq 1$, 
then the conclusion of Theorem~3.1 remains valid.

\item
If $b>1$ and $a\geq n$, then
\begin{equation*}
A^2(\Omega)=
{\rm Span}
\{z^p : p\in\mathbb{Z}_+^n,\ |p|\le a-n \}.
\end{equation*}
More precisely, the dimension of $A^2(\Omega)$
is given by
\begin{equation*}
\dim A^2(\Omega)=
\begin{cases} 
&0 \quad  \mbox{ if  $ 0<a<n$,} \\
&\displaystyle \binom{n+k}{n}  \quad
\mbox{if  
$n+k\leq a < n+k+1$ 
 $(k\in\Z_+)$}.
\end{cases}
\end{equation*}
\end{enumerate}
Therefore,  in the case
where $\Omega$ is the corresponding horn domain, 
if $b>1$, then in Theorems~4.1 and 5.3 the assumption
$n+1<a, a_1, a_2 \leq n+2$ must 
be replaced by 
$n+1\leq a, a_1, a_2 < n+2$.
\smallskip

(2) \;\;
Next, we consider an extremely narrow 
horn-shaped situation.
Consider the condition:
there exists a positive constant 
$A$  such that
\begin{equation}\label{eqn:6.3}
d\sigma(U(\rho))
\leq
A e^{-\rho}
\quad 
\mbox{for $\rho\geq R$.}
\end{equation}
If the condition~(\ref{eqn:6.1}) is 
replaced by the condition~(\ref{eqn:6.3}) 
in the definition of horn domains, 
then the Bergman space of 
the corresponding horn domain
becomes infinite-dimensional.

Since the above statements in (1) and (2) 
can be proved 
in essentially 
the same manner as Theorem~3.1, 
we leave the details to the reader.

\medskip


{\it Acknowledgements.} 
The authors are grateful to Atsushi Yamamori for pointing out a serious error in an earlier draft of the paper.
Joe Kamimoto was supported by JSPS KAKENHI Grant Numbers JP26K06847.
Ninh Van Thu was supported by the Vietnam National Foundation for Science and Technology Development (NAFOSTED) under grant number 101.02-2021.42.

\section*{Conflict of Interest}
The authors declare that they have no conflict of interest.

\section*{Data Availability}
No data were generated or analyzed in this study.
Data sharing is therefore not applicable to this article.

\end{document}